\newsavebox{\wwide}
\newcommand{\wwidehat}[1]{\sbox{\wwide}{$#1$}
\ifdim\wd\wwide < 1.1 em \widehat{#1} \else
\setlength
{\unitlength}{0.01\wd\wwide}\overset
{\begin{picture}(100,6)
\path(0,0)(50,6)(100,0)
\end{picture}}{#1}\fi}
\newcommand{\dMp}{De Morgan poset}
\newcommand{\dMa}{De Morgan algebra} 
\newcommand{\wwidetilde}[1]{\sbox{\wwide}{$#1$}
\ifdim\wd\wwide < 1.1 em \widetilde{#1} \else
\setlength
{\unitlength}{0.01\wd\wwide}\overset
{\begin{picture}(100,6)
\path(0,0)(33,6)(45,6)(55,0)(67,0)(100,6)
\end{picture}}{#1}\fi}
\@undefined\usepackage[usenames,dvips]{color}
\else\usepackage[usenames,dvipsnames]{color}
\definecolor{ChadDarkBlue}{rgb}{.1,0,.2}  
\definecolor{ChadBlue}{rgb}{.1,.1,.5}  
\definecolor{ChadRoyal}{rgb}{.2,.2,.8}  
\definecolor{ChadGreen}{rgb}{0,.4,0}    
\definecolor{ChadRed}{rgb}{.5,0,.5}  
\def\smallskip{\vskip\smallskipamount}
\def\medskip{\vskip\medskipamount}
\def\bigskip{\vskip\bigskipamount}
\numberwithin{equation}{section}
\theoremstyle{plain}
\newtheorem{theorem}{Theorem}[section]
\newtheorem{proposition}[theorem]{Proposition}
\newtheorem{corollary}[theorem]{Corollary}
\theoremstyle{definition}
\newtheorem{definition}[theorem]{Definition}
\newtheorem{remark}[theorem]{Remark}
\newtheorem{observation}[theorem]{Observation}
\newtheorem{example}[theorem]{Example}
\newcounter{ok}
{\end{list}}
\newcounter{aok}
{\end{list}}
\def\go#1;#2;#3 {\vbox to0pt{\kern-#3\hbox{\kern#2 #1}\vss}\nointerlineskip}
\begin{document}
\title{Set Representation of Partial Dynamic\\ \dMa{}s}
\author{\IEEEauthorblockN{Ivan~Chajda}
\IEEEauthorblockA{Department of Algebra and Geometry, Faculty of \\
                               Science,  Palack\'y University Olomouc, 17. listopadu\\
                             12,  CZ-771 46 Olomouc, Czech Republic\\
        E-mail: ivan.chajda@upol.cz}
\and 
\IEEEauthorblockN{Jan~Paseka}
\IEEEauthorblockA{Department of Mathematics and Statistics\\
		Faculty of Science, Masaryk University\\
		{Kotl\'a\v r{}sk\' a\ 2}, CZ-611~37~Brno, Czech Republic\\
        E-mail: paseka@math.muni.cz}}



\markboth{Set Representation of Dynamic \dMa{}s}%
{Set Representation of Dynamic \dMa{}s}
%

%




\IEEEcompsoctitleabstractindextext{%
\begin{abstract}
By a \dMa{} is meant a bounded poset equipped with an antitone 
involution considered as negation. Such an algebra can be 
considered as an algebraic axiomatization of a propositional 
logic satisfying the double negation law. Our aim is to 
introduce the so-called tense operators in every \dMa{} for 
to get an algebraic counterpart of a tense logic with negation 
satisfying the double negation law which need not be Boolean. 

Following the standard construction of tense operators $G$  
and $H$ by a  frame we solve the following question:  if 
a dynamic \dMa{} is given, how to find a  frame such 
that its tense operators $G$ and $H$ can be reached by this construction.

\end{abstract}

\begin{IEEEkeywords} De Morgan lattice, De Morgan poset, semi-tense operators, 
tense operators, (partial) dynamic De Morgan algebra.
\end{IEEEkeywords}}

\maketitle  
\pagestyle{plain}

\IEEEdisplaynotcompsoctitleabstractindextext

%

{\section*{Introduction}}

\label{intro}

\IEEEPARstart{D}{ynamic \dMa{s} were already investigated by the authors in 
\cite{dem}. The reached theory is good enough for a description 
of tense operators in a logic satisfying the double negation law when 
a frame is given as well for the task to determine a frame provided 
tense operators are given.}

When studying partial dynamic \dMa{}s, we are given 
a De Morgan poset and we solve both the questions mentioned 
above. For this, we have to modify our original 
definition by axioms which are formulated in the language 
of ordered sets with involution only. On the other hand, 
we have an advantage of using the algebraic tools introduced 
already in \cite{dem} which can essentially shorten our paper.

For the reader convenience we repeat that tense operators are introduced 
for to incorporate the time dimension in the logic under consideration. 
It means that our given logic is enriched by the operators 
$G$ and $H$, see e.g. \cite{1} for the classical logic and  
\cite{chirita}, \cite{2}, \cite{chajda} for several non-classical logics.

It is worth noticing that the operators $G$ and $H$ can be considered as certain kind of modal  operators which were already studied 
for intuitionistic calculus by D. Wijesekera 
\cite{wijesekera} and in a  general setting by W.B.~Ewald \cite{Ewald}. For the logic of quantum mechanics (see e.g. \cite{dvurec} for details of the so-called 
quantum structures), the underlying algebraic structure is e.g. an 
orthomodular lattice or   the so-called effect algebra 
(see \cite{dvurec},\cite{FoBe}) 
and the corresponding tense logic was treated in 
\cite{chajdakolarik,dyn,dynpos,dyn2}, in a bit more general setting also in \cite{botur}.

The paper is organized as follows. After introducing 
several necessary algebraic concepts, we introduce tense operators in a 
\dMp, i.e., in an 
arbitrary logic satisfying double negation law without regards what another logical connectives are considered. Moreover, in this logic 
neither the principle of contradiction, nor the principle 
of excluded middle are valid for the negation, but all De Morgan laws hold.
Also we get a simple construction of tense operators which 
uses lattice theoretical properties of the underlying ordered set.  In Section
\ref{setreppartDMa}
we outline the problem of a representation of 
partial dynamic \dMa{}s and we 
solve it for partial dynamic \dMa{}s satisfying natural assumptions. 
This means that we get a procedure how to construct a corresponding  
frame to be in accordance with the construction from Section  
\ref{prelim}. In particular,  any dynamic \dMa{} 
is set representable.

\medskip

\section{{Preliminaries} and basic facts}
\label{prelim}

We refer the reader to \cite{Balbes} 
for standard definitions and notations for lattice structures.

\begin{definition}  A structure ${\mathbf A}=(A;\leq,',0,1)$ (${\mathbf A}=(A;\wedge,\vee,',0,1)$)  is called a
{\bfseries De Morgan poset} ({\bfseries De Morgan lattice}) if $(A;\leq,0,1)$ is a poset 
($(A;\wedge,\vee,0,1)$ is a lattice) with the 
top element $1$ and the bottom element $0$ 
and $'$ is a unary
operation called {\bfseries negation} with properties\ 
$a\leq b \Rightarrow b'\leq a'$\ 
and $a=a''$. 
\end{definition} 

In fact in a De Morgan poset $A$ we have $a\leq b$ iff  $b'\leq a'$,
because $a\leq b\Rightarrow b'\leq a'\Rightarrow a''\leq
b''\Rightarrow a\leq b$.

A  \textbf{ morphism} $f\colon A\to B$ \textbf{ of bounded  posets} 
(\dMp{}s)
is an order, (negation), top element and bottom element preserving map. 
A morphism $f\colon A\to B$ of bounded  posets 
is \textbf{order reflecting} if ($f(a)\leq f(b)$ if and only if $a\leq b$)  
for all $a, b\in A$.

Let $h\colon A \to B$ be a partial mapping of  \dMp{}s.  
We say that the partial mapping 
$h^{\partial}\colon A\to B$ is the {\bfseries dual of $h$} if  
$h^{\partial}(a)$ is defined 
for all $a\in A$ such that $a'\in \mathop{dom} h$ 
in which case
$$
h^{\partial}(a)=h(a')'.
$$

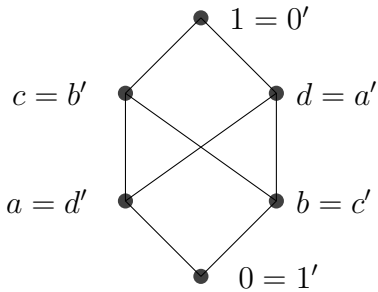
\begin{figure}[h]
\centering
\begin{tikzpicture}[scale=0.71654976583]
\coordinate [label=right:\phantom{ll}\hbox{$1=0'$}] (1) at (0,-1.2);
\coordinate [label=left:\phantom{lll}\hbox{$c=b'$}\phantom{lll}] (c) at (-1.4,-2.6);
\coordinate [label=left:\phantom{lll}\hbox{$a=d'$}\phantom{lll}] (a) at (-1.4,-4.6);
\coordinate [label=right:\phantom{l}\hbox{$d=a'$}\phantom{lll}] (d) at (1.4,-2.6);
\coordinate [label=right:\phantom{l}\hbox{$b=c'$}\phantom{lll}] (b) at (1.4,-4.6);
\coordinate [label=right:\phantom{lll}\hbox{$0=1'$}] (0) at (0,-6);
\draw (0) -- (a)  -- (c) -- (1);
\draw (0) -- (b)  -- (d) -- (1);
\draw (a)  -- (d);
\draw (b)  -- (c);
\fill[black,opacity=.75] (0) circle (4pt);
\fill[black,opacity=.75] (1) circle (4pt);
\fill[black,opacity=.75] (a) circle (4pt);
\fill[black,opacity=.75] (b) circle (4pt);
\fill[black,opacity=.75] (c) circle (4pt);
\fill[black,opacity=.75] (d) circle (4pt);
\end{tikzpicture}
\caption{The poset of Example \ref{benotbeJP}}\label{benzene}
\end{figure}

\begin{example}\label{benotbeJP} {\upshape{}The \dMp{} 
$\mathbf{M}=({M};\leq,$ $', 0,1)$, $M=\{0, a, b, c, d, 1\}$ displayed by the Hasse diagram in Figure \ref{benzene}
is the smallest non-lattice \dMp. }
\end{example}

\begin{observation}[\cite{dyn}]\label{obsik} Let $\mathbf A, \mathbf B$ be 
bounded posets (\dMp{s}), $T$ a set of morphisms from  $\mathbf A$ 
to $\mathbf B$ of bounded 
posets (\dMp{s}). 
The following conditions are equivalent:
\begin{enumerate}
\item[{\rm(i)}] \(((\forall t \in T)\, {t}(a)\leq {t}(b))\implies a\leq
b\) for any elements \(a,b\in A\);
\item[{\rm(ii)}] The morphism $i_{\mathbf A}^{T}\colon{}A \to B^{T}$ defined by 
$i_{\mathbf A}^{T}(a)=(t(a))_{t\in T}$ for 
all $a\in A$ is order reflecting.
\end{enumerate}
\end{observation}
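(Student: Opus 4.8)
The plan is to check that $i_{\mathbf A}^{T}$ is genuinely a morphism of bounded posets (\dMp{}s), and then to observe that, once the product order on $B^{T}$ is spelled out, conditions (i) and (ii) are literally the same implication, so the equivalence follows with no real work.

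First I would fix the structure on $B^{T}$: the coordinatewise order $(x_{t})_{t\in T}\le (y_{t})_{t\in T}$ iff $x_{t}\le y_{t}$ for every $t\in T$, the top element $(1)_{t\in T}$, the bottom element $(0)_{t\in T}$, and, in the \dMp{} case, the negation $\bigl((x_{t})_{t\in T}\bigr)'=(x_{t}')_{t\in T}$. A routine verification shows this is again a bounded poset (\dMp{}), since antitonicity and $a=a''$ hold coordinatewise. Because every $t\in T$ preserves order, top, bottom (and negation), the map $i_{\mathbf A}^{T}(a)=(t(a))_{t\in T}$ preserves all of these coordinatewise and hence is a morphism of bounded posets (\dMp{}s). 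In particular $a\le b$ always forces $i_{\mathbf A}^{T}(a)\le i_{\mathbf A}^{T}(b)$, so $i_{\mathbf A}^{T}$ being order reflecting is equivalent to the single implication $i_{\mathbf A}^{T}(a)\le i_{\mathbf A}^{T}(b)\implies a\le b$ holding for all $a,b\in A$.

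Next I would rewrite $i_{\mathbf A}^{T}(a)\le i_{\mathbf A}^{T}(b)$ by the definition of the product order: it holds precisely when $t(a)\le t(b)$ for every $t\in T$. Substituting this into the implication displayed above turns the statement ``$i_{\mathbf A}^{T}$ is order reflecting'' into exactly ``$((\forall t\in T)\,t(a)\le t(b))\implies a\le b$ for all $a,b\in A$'', which is condition (i). This yields both implications simultaneously, so (i) and (ii) are equivalent.

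I do not expect any genuine obstacle. The only point deserving an explicit line is the claim that $B^{T}$ with the coordinatewise operations is a bounded poset (\dMp{}) and that $i_{\mathbf A}^{T}$ is a morphism into it; both are immediate consequences of the corresponding properties holding in $\mathbf B$ in each coordinate, together with the hypothesis that each $t\in T$ is a morphism.
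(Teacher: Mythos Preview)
Your argument is correct: once the coordinatewise order on $B^{T}$ is unfolded, the condition ``$i_{\mathbf A}^{T}$ is order reflecting'' is literally the same statement as (i), and your verification that $i_{\mathbf A}^{T}$ is a morphism of bounded posets (\dMp{}s) is routine and correct. The paper itself gives no proof of this observation---it is stated with a citation to \cite{dyn} and treated as immediate---so there is nothing to compare; your write-up simply supplies the obvious details.
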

We then say that $T$ is a {\bfseries full set of 
order preserving maps with respect to} ${\mathbf B}$. We may in this case 
identify ${\mathbf A}$ with a subposet (sub-\dMp{}) of ${\mathbf B}^{T}$ since 
$i_{\mathbf A}^{T}$ is an order reflecting 
morphism of bounded posets (\dMp{s}).

A pair 
$(f,g)$ of order-preserving mappings $f\colon{}A\to B$ and 
$g\colon{}B \to A$ between posets ${\mathbf A}$ and ${\mathbf B}$ 
is a {\bfseries  Galois connection} or 
an {\bfseries  adjunction} between ${\mathbf A}$ and ${\mathbf B}$ provided that 
$f(a)\leq b\ \text{if and only if}\ a\leq g(b)$ {for all}\ 
$a\in A, b\in B.$
In an adjunction $(f, g)$ the mapping $f$ is called  the {\bfseries  left adjoint} 
and the mapping $g$ is called the {\bfseries  right adjoint}. The pair 
$(f,g)$ of order-preserving mappings $f\colon{}A\to B$ and 
$g\colon{}B \to A$ is an adjunction if and only if 
$$a\leq g(f(a)) \, \text{and}\, f(g(b))\leq b\ \text{for all}\, 
a\in A, b\in B.$$

The second concept which will be used are so-called tense operators. They 
are in certain sense quantifiers which quantify 
over the time dimension of the logic under consideration. 
These tense operators were firstly introduced as operators on 
Boolean algebras (see \cite{1} for an overview). Chajda 
and Paseka introduced in \cite{dem} the notion of a  dynamic \dMa. 
\label{orto}

The following notion of a partial dynamic \dMa\ is stronger than the notion introduced in  \cite{dem}  but for 
 dynamic \dMa{s}\ both notions coincide.  Note only that our condition (P1) combined with the condition 
(T4) for tense De Morgan algebras in the sense of \cite{figallo} yields our condition (P4).

\begin{definition}\label{dadM}
  By a \textbf{partial dynamic \dMa{}} is meant a triple $\mathbf{D}=(\mathbf{A};G,H)$ such that 
$\mathbf{A}=(A;\leq,',0,1)$
  is a \dMp{} with negation $'$ and $G, H$ are partial mappings of $A$ into itself satisfying
  \begin{itemize}
    \item[(P1)] $G(0)=0$, $G(1)=1$, $H(0)=0$ and $H(1)=1$.
    \item[(P2)] $x\leq y$ implies $G(x)\leq G(y)$ whenever 
				$G(x), G(y)$ exist, and $H(x)\leq H(y)$ whenever 
				$H(x), H(y)$ exist.
    \item[(P3)] $x\leq GP(x)$ whenever $H(x')$ exists, $P(x)=H^{\partial}(x)$ and $GP(x)$ exists, 
and $x\leq HF(x)$ whenever $G(x')$ exists, $F(x)=G^{\partial}(a)$ and $HF(x)$ exists. 
 \item[(P4)]  $x\leq y$ implies $G(x)\leq F(y)$ whenever $G(y')$ and $G(x)$ exist,  and 
$H(x)\leq P(y)$ whenever $H(y')$ and $H(x)$ exist.
  \end{itemize}
  Just defined $G$ and $H$ will be called \textbf{tense operators} 
of a partial dynamic  \dMa{}  $\mathbf{D}$. If both $G$ and $H$ are total 
we will speak about a \textbf{dynamic   \dMa{}}. 

If we omit the condition (P3), i.e., only the conditions (P1), (P2) and (P4) 
are satisfied we say that $G$ and $H$ are \textbf{semi-tense operators on}   
$\mathbf{A}$.

If $(\mathbf{A}_1;G_1, H_1)$ and $(\mathbf{A}_2;G_2, H_2)$ are 
partial dynamic  algebras, then a {\bfseries morphism of 
partial dynamic  algebras} 
$f\colon (\mathbf{A}_1;G_1,  H_1)\to (\mathbf{A}_2;G_2,  H_2)$  is a morphism 
of \dMp{}s such that $f(G_1(a))=G_2(f(a))$, for any $a\in A_1$ such 
$G_1(a)$ is defined and $f(H_1(b))=H_2(f(b))$, for any $b\in A_1$ such 
$H_1(b)$ is defined.

Partial dynamic  \dMa{} $\mathbf{D}=(\mathbf{A};G,H)$ is called 
{\bfseries complete} if its reduct  $(A;\leq,$ $0,1)$ is a complete lattice.
\end{definition}

The semantical interpretation of these \textbf{tense operators} $G$ and $H$ is as follows.
Consider a pair $(T,\leq)$ where $T$ is a non-void set and $\leq$ is a partial order on $T$. Let $s\in T$ and $f(s)$ be a formula
of a given logical calculus. We say that $G\bigl( f(t)\bigr)$ \textbf{is valid} if for any $s\geq t$ the formula $f(s)$ is valid.
Analogously, $H\bigl( f(t)\bigr)$ is valid if $f(s)$ is valid for each $s\leq t$. Thus the unary operators $G$ and $H$ constitute an algebraic
counterpart of the tense operations ``it is always going to be the case that" and ``it has always been the case that", respectively. 
Similarly, the operators $F$ and $P$ can be considered in certain sense 
as existential quantifiers ``it will at some time be the case that" 
and ``it has at some time been the case that".

\medskip
 
In what follows we want to provide a meaningful procedure giving 
tense operators on every \dMp\ which will be in accordance with an 
intuitive idea of time dependency. 

By a \textbf{frame} (see e.g. \cite{2}) is meant a couple $(T,R)$ where $T$ is a non-void set 
and $R$ is a binary relation on $T$. Furthermore, we say that $R$  is {\bfseries serial} 
for all $x\in T$ there is $y\in T$ such that $x\mathrel{R}y$. 
In particular, every reflexive relation is serial. 
The set $T$ is considered to be a \textbf{time scale}, the relation $R$ expresses a relationship
``to be before" and  ``to be after". Having 
a \dMp{} $\mathbf{A}=(A;\leq,{}', 0, 1)$ and a non-void set $T$, 
we can produce the direct power
$\mathbf{A}^T=(A^T;\leq,{}', o,j)$ where the relation $\leq$ and the operation $'$ are 
defined and evaluated on $p,q\in A^T$ componentwise, i.e. $p\leq q$  if
$p(t)\leq q(t)$  for each $t\in T$ and $p'(t)=p(t)'$ for each $t\in T$.  
Moreover, $o, j$ are such elements of $A^T$ that $o(t)=0$ and $j(t)=1$
for all $t\in T$.

\begin{theorem}{\em\cite[Theorem II.7,Corollary II.9]{dem}} {}\label{upcomplate} 
Let $\mathbf{M}=(M;\leq,',0,1)$ be a complete 
\dMa{}  and let $(T,R)$ 
	be a frame. Define mappings $\widehat{G}, \widehat{H}$
  of $M^T$ into itself as follows: For all $p\in M^T$ and all $x\in T$, 
    $$\begin{array}{r c l}
\mbox{$\widehat{G}(p)(x)$}&=&\mbox{$\bigwedge_{M}\{p(y)\mid x \mathrel{R} y\} $}\phantom{.}\quad 
\text{and}\\ 
\mbox{$\widehat{H}(p)(x)$}&=&\mbox{$\bigwedge_{M}\{p(y)\mid y \mathrel{R}  x\} $}.
\end{array}$$
  Then 
\begin{enumerate}[{\rm (a)}]
\item $\widehat{G},\widehat{H}$  are total operators on 
$\mathbf{M}^T$.
\item If $R$ is serial then $\widehat{G}$  is a semi-tense operator.
\item If $R^{-1}$ is serial then $\widehat{H}$  is a semi-tense operator.
\item  If $R$ and  $R^{-1}$ are serial then $\mathbf{D}=(\mathbf{M}^T;\widehat{G},\widehat{H})$ is 
a dynamic  \dMa{}.  
\end{enumerate}  
\end{theorem}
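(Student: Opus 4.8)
The plan is to verify each of the four claims in turn, working directly from the definitions of $\widehat{G}$ and $\widehat{H}$ as pointwise infima and from the fact that $\mathbf{M}$ is a complete De Morgan algebra, so that all the infima appearing in the definition exist in $M$. First I would handle (a): for any $p\in M^T$ and any $x\in T$, the set $\{p(y)\mid x\mathrel{R}y\}$ is a (possibly empty) subset of $M$, and since $\mathbf{M}$ is complete its infimum exists in $M$ (with the empty infimum equal to $1$); hence $\widehat{G}(p)$ is a well-defined element of $M^T$, and likewise for $\widehat{H}$. This already shows that $\widehat{G},\widehat{H}$ are total operators on $\mathbf{M}^T$, which is the content of (a); no seriality is needed here precisely because the empty meet is allowed.

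Next I would check the semi-tense conditions (P1), (P2), (P4) for $\widehat{G}$ under the hypothesis that $R$ is serial (claim (b)), and symmetrically for $\widehat{H}$ under $R^{-1}$ serial (claim (c)). For (P1): $\widehat{G}(o)(x)=\bigwedge_M\{0\mid x\mathrel{R}y\}$, which equals $0$ exactly because seriality guarantees the index set is non-empty, and $\widehat{G}(j)(x)=\bigwedge_M\{1\mid x\mathrel{R}y\}=1$ always. For (P2): if $p\leq q$ in $M^T$ then $p(y)\leq q(y)$ for all $y$, so the meet over $\{y\mid x\mathrel{R}y\}$ of the $p(y)$ is $\leq$ the corresponding meet of the $q(y)$, i.e. $\widehat{G}(p)\leq\widehat{G}(q)$ pointwise. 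For (P4), recall $\widehat{F}=\widehat{G}^{\partial}$, so $\widehat{F}(q)(x)=\widehat{G}(q')(x)'=\bigl(\bigwedge_M\{q(y)'\mid x\mathrel{R}y\}\bigr)'=\bigvee_M\{q(y)\mid x\mathrel{R}y\}$ by the De Morgan laws for the complete lattice $\mathbf{M}$ (using that $'$ is an antitone involution, hence turns arbitrary meets into joins). Now if $p\leq q$ then for each $y$ with $x\mathrel{R}y$ we have $p(y)\le q(y)$, and seriality gives some such $y_0$; hence $\widehat{G}(p)(x)=\bigwedge_M\{p(y)\}\le p(y_0)\le q(y_0)\le\bigvee_M\{q(y)\}=\widehat{F}(q)(x)$, which is (P4) for $G,F$. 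The argument for $H,P$ is identical with $R$ replaced by $R^{-1}$. Finally, claim (d): if both $R$ and $R^{-1}$ are serial, then by (b) and (c) both $\widehat{G}$ and $\widehat{H}$ satisfy (P1), (P2), (P4), so it remains to verify (P3), i.e. $p\le\widehat{G}\widehat{P}(p)$ and $p\le\widehat{H}\widehat{F}(p)$ where $\widehat{P}=\widehat{H}^{\partial}$, $\widehat{F}=\widehat{G}^{\partial}$. Fix $x\in T$; then $\widehat{G}\widehat{P}(p)(x)=\bigwedge_M\{\widehat{P}(p)(y)\mid x\mathrel{R}y\}$ and, for each such $y$, $\widehat{P}(p)(y)=\bigvee_M\{p(z)\mid z\mathrel{R}y\}\ge p(x)$ since $x\mathrel{R}y$ puts $p(x)$ among the joinands; taking the meet over all $y$ with $x\mathrel{R}y$ preserves this lower bound, so $\widehat{G}\widehat{P}(p)(x)\ge p(x)$, and the other inequality is symmetric. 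Hence $\mathbf{D}=(\mathbf{M}^T;\widehat{G},\widehat{H})$ satisfies (P1)--(P4) with $\widehat{G},\widehat{H}$ total, i.e. it is a dynamic \dMa{}.

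A natural way to organize the write-up is to prove a single combined pointwise computation once — namely the two De Morgan identities $\widehat{F}(q)(x)=\bigvee_M\{q(y)\mid x\mathrel{R}y\}$ and $\widehat{P}(q)(x)=\bigvee_M\{q(y)\mid y\mathrel{R}x\}$ — and then quote it in the verifications of (P4) and (P3), and to note that (c) is literally (b) applied to the frame $(T,R^{-1})$ together with the observation that passing to $R^{-1}$ swaps the roles of $\widehat{G}$ and $\widehat{H}$. In fact essentially all of this is already recorded in \cite[Theorem II.7, Corollary II.9]{dem}, so the proof can be as short as citing that reference and pointing out that completeness of $\mathbf{M}$ makes all the meets and joins well-defined; the only genuinely new observation is the harmless one that the empty meet convention is what yields totality in (a) without any seriality hypothesis.

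The part requiring the most care is not any single deduction — each is a one-line meet/join manipulation — but rather bookkeeping: correctly unwinding the partial-duals $\widehat{F}=\widehat{G}^{\partial}$ and $\widehat{P}=\widehat{H}^{\partial}$ (so that $\widehat{F}(q)(x)=\widehat{G}(q')(x)'$, and then applying De Morgan to turn the inner meet into a join), and being consistent about whether a given quantification ranges over $\{y:x\mathrel{R}y\}$ or $\{y:y\mathrel{R}x\}$. The one substantive point to flag is the role of seriality: it is used exactly twice per operator, once to force $\widehat{G}(o)=o$ in (P1) and once to supply a witness $y_0$ in the chain of inequalities for (P4); (P2), (P3) and all of (a) go through without it. This is why the statement is graded — (a) unconditional, (b)/(c) each needing one seriality hypothesis, (d) needing both.
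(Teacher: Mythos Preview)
Your proof is correct. The paper itself does not supply a proof of this theorem at all: it is quoted verbatim from \cite[Theorem II.7, Corollary II.9]{dem} and stated without proof, exactly as you anticipate in your third paragraph. So there is nothing to compare against except the citation, and your write-up is strictly more detailed than what appears here.

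One small remark on your final paragraph: you say seriality is needed ``once to supply a witness $y_0$ in the chain of inequalities for (P4)'', which is right, but then you also list (P3) among the conditions that ``go through without it''. That is true of the computation you give for (P3) in isolation (if $\{y:x\mathrel{R}y\}$ is empty the meet is $1\ge p(x)$), but be aware that in the paper's Definition~\ref{dadM} the axioms (P1)--(P4) are phrased jointly for the pair $(G,H)$; so in (d) both seriality hypotheses are still required, via (b) and (c), to secure (P1) and (P4) for \emph{each} of $\widehat G$ and $\widehat H$. You have this right in substance; just make sure the phrasing in a final version does not suggest that (d) could hold with only one of the two seriality assumptions.
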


We say that the {\bfseries operators $\widehat{G}$ and $\widehat{H}$
  on ${M}^T$ are constructed by means of $(T,R)$}.

\section{Set representation of partial dynamic 
De Morgan algebras}\label{setreppartDMa}

In Theorem \ref{upcomplate}, we presented a construction 
of natural tense operators when a 
\dMp{} and a frame are given. However, 
we can ask, for a given partial 
dynamic \dMa\ $(\mathbf{A};G,H)$, whether 
there exist a frame $(T,R)$ and a  complete de Morgan lattice  
$\mathbf{M}=(M;\leq,',0,1)$ such that the tense operators 
$G, H$ can be derived by this construction  
where $(\mathbf{A};G,H)$ is embedded 
into the power algebra $(\mathbf{M}^T;\widehat{G},\widehat{H})$.  
Hence, we ask, 
that there exists a suitable set $T$ and a binary relation 
$R$ on $T$ such that 
if every element $p$ of $A$ is in the form 
$(p(t))_{t\in T}$ in $M^T$ then   
$G(p)(s)= \bigwedge_M\{p(t)\mid s\mathrel{R}t\}$ for all $p\in \mathop{dom} G$ 
and $s\in T$, 
and $H(p)(s) = \bigwedge_M\{p(t)\mid t\mathrel{R}s\}$  for all 
$p\in \mathop{dom} H$ and $s\in T$. 
If such a representation exists then one can recognize 
the time variability of elements of $A$ expressed 
as time dependent functions $p\colon{}T \to M$ and 
$(\mathbf{A};G,H)$ is said to be {\bfseries representable in} 
 $\mathbf{M}$  {\bfseries with respect to} $T$.

In what follows, we will show that there is  a set representation 
theorem for  (partial) dynamic De Morgan algebras.  

Let us start with the following example.

\begin{example} \label{TDDMAboolupcomplate} \upshape Let 
$\mathbf{2}=(\{0, 1\};\vee, \wedge, ', 0, 1)$ be a 
two-element Boolean algebra. We will denote by 
$\mathbf{M}_{2}=({M}_{2};\leq, ', 0, 1)$ 
a complete De Morgan lattice such that  
${M}_{2}=\{0, 1\}\times \{0, 1\}$, 
$({M}_{2};\vee, \wedge,  0, 1)$ is a lattice reduct of  the Boolean  
algebra $\mathbf{2}\times \mathbf{2}$ with the induced order $\leq$ and the negation 
on ${M}_{2}$ is defined by  $(a,b)'=(b', a')$.
Let $(T,R)$ 	be a  frame. 
Let the operators $\widehat{G}$ and $\widehat{H}$
  on ${\mathbf M}_{2}^T$ be constructed by means of $(T,R)$. 
  Then by Theorem \ref{upcomplate}   
  we have that 
$(\mathbf{M}_2^T;\widehat{G},\widehat{H})$  is 
a complete dynamic De Morgan algebra. 
Moreover, ${\mathbf M}_{2}^T$ is isomorphic as a lattice to  a De Morgan algebra of sets.
\end{example}

 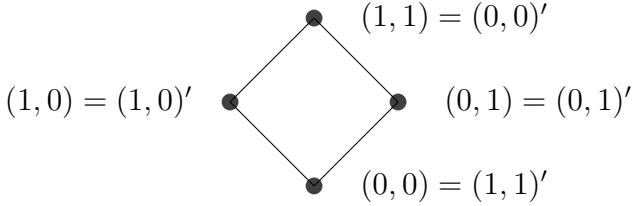
\begin{figure}[h]
\centering
\begin{tikzpicture}[scale=0.7976583]
\coordinate [label=left:\phantom{lll}\hbox{$(1,0)=(1,0)'$}\phantom{lll}] (a) at (-1.4,-4.6);
\coordinate [label=right:\phantom{llll}\hbox{$(1,1)=(0,0)'$}\phantom{lll}] (e) at (0,-3.2);
\coordinate [label=right:\phantom{llll}\hbox{$(0,1)=(0,1)'$}\phantom{lll}] (b) at (1.4,-4.6);
\coordinate [label=right:\phantom{llll}\hbox{$(0,0)=(1,1)'$}] (0) at (0,-6);
\draw (0) -- (a); 
\draw (0) -- (b) ; 
\draw (a)  --  (e);
\draw (b)  --  (e);
\fill[black,opacity=.75] (0) circle (4pt);
\fill[black,opacity=.75] (a) circle (4pt);
\fill[black,opacity=.75] (b) circle (4pt);
\fill[black,opacity=.75] (e) circle (4pt);
\end{tikzpicture}
\caption{Figure of the underlying poset of the complete De Morgan lattice  $\mathbf{M}_{2}$ 
from Example \ref{TDDMAboolupcomplate}}\label{FigM2v}
\end{figure}

Recall that  the four-element  De Morgan poset $\mathbf{M}_{2}$, considered as a distributive De Morgan lattice, 
generates the variety of all distributive De Morgan lattices (see e.g. \cite{Balbes}, \cite{Petrovich}).  
This result was a motivation for our study of the representation theorem of  De Morgan posets.

For any  De Morgan poset 
 ${\mathbf A}=(A;\leq, ', 0, 1)$, we will denote by 
  $T_{\mathbf A}^{\text{DMP}}$ a set of morphisms of  
De Morgan poset into the four-element  De Morgan 
poset $\mathbf{M}_{2}$. 
The elements $\kappa_D\colon{} A\to M_2$ of 
$T_{\mathbf A}^{\text{DMP}}$ (indexed by 
proper down-sets $D$ of  $\mathbf{A}$ which correspond to morphisms 
of bounded posets $h_D\colon{} A\to \{0, 1\}$ 
such that $h_D(a)=0$ iff $a\in D$) are 
morphisms of  De Morgan  posets defined by the prescription 
${{\kappa}_{D}}(a)=({h_{D}}(a), {h_{D}}^{\partial}(a))$ for all $a\in A$. 

As we will see later, this set $T_{\mathbf A}^{\text{DMP}}$ will serve as our time scale.  
Hence, our next task is to determine a binary relation $R$ on $T_{\mathbf A}^{\text{DMP}}$ 
such that the couple $(T_{\mathbf A}^{\text{DMP}}, R)$ will be our appropriate frame.

Let us denote, for any proper down-set $D$ of  $\mathbf{A}$, by 
$\partial(D)$ the set $A\setminus \{d'\mid d\in D$\}. 
Then $\partial(D)$ is again a 
 proper down-set of  $\mathbf{A}$ such that 
$ {h_{D}}^{\partial}={h_{{\partial}(D)}}$ and we have 
$ {h_{D}}={{h_{{\partial}(D)}}^{\partial}}$.

To simplify the notation we will use for elements of 
$T_{\mathbf A}^{\text{DMP}}$ 
letters $s$ and $t$ whenever we will need not their 
concrete representation via down-sets.


\begin{proposition}\label{distDMPvloyeni} Let ${\mathbf A}=(A;\leq, ', 0, 1)$ 
be a  De Morgan  poset{}. Then the map 
$i_{{}{\mathbf A}}\colon{}A\to M_2^{T_{\mathbf A}^{\text{DMP}}}$ given 
by $i_{{}{\mathbf  A}}(a)(s)=s(a)$ for all $a\in A$ and all 
$s\in T_{\mathbf A}^{\text{DMP}}$ 
is an order-reflecting morphism of De Morgan posets 
such that $i_{{}{\mathbf  A}}(A)$ is 
a De Morgan subposet of \/ ${\mathbf  M}_2^{T_{\mathbf A}^{\text{DMP}}}$.
\end{proposition}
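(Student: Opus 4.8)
The plan is to reduce the statement to Observation~\ref{obsik}, applied with $\mathbf{B}=\mathbf{M}_2$ and $T=T_{\mathbf A}^{\text{DMP}}$: the map $i_{\mathbf A}$ of the statement is precisely the map $i_{\mathbf A}^{T_{\mathbf A}^{\text{DMP}}}$ of that observation, since $i_{\mathbf A}(a)=(s(a))_{s\in T_{\mathbf A}^{\text{DMP}}}$. First I would dispatch the easy half, that $i_{\mathbf A}$ is a morphism of \dMp{}s. Each $\kappa_D\in T_{\mathbf A}^{\text{DMP}}$ is, as already recorded, a morphism of De Morgan posets $\mathbf A\to\mathbf M_2$ (it preserves order and negation and sends $0\mapsto(0,0)$, $1\mapsto(1,1)$, the second coordinate behaving well by $h_D^{\partial}=h_{\partial(D)}$ noted above). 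Since the order, the negation and the constants $o,j$ of $\mathbf M_2^{T_{\mathbf A}^{\text{DMP}}}$ are all evaluated componentwise, it follows at once that $i_{\mathbf A}$ preserves order, preserves negation ($i_{\mathbf A}(a')(s)=s(a')=s(a)'=(i_{\mathbf A}(a))'(s)$ for every $s$), and sends $0\mapsto o$, $1\mapsto j$.

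The one step with content is to verify condition~(i) of Observation~\ref{obsik}, i.e. that $T_{\mathbf A}^{\text{DMP}}$ is a full set of order-preserving maps with respect to $\mathbf M_2$; equivalently, whenever $a\not\le b$ in $A$ there is some $s\in T_{\mathbf A}^{\text{DMP}}$ with $s(a)\not\le s(b)$. Here I would take $s=\kappa_D$ for the principal down-set $D={\downarrow b}=\{x\in A\mid x\le b\}$. It is a down-set containing $0$, and from $a\not\le b$ we get $b\ne 1$ (as $1$ is the top), hence $1\notin D$, so $D$ is a proper down-set and does index an element of $T_{\mathbf A}^{\text{DMP}}$. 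Now $b\in D$ gives $h_D(b)=0$ while $a\notin D$ gives $h_D(a)=1$, so the first coordinates of $\kappa_D(a)=(h_D(a),h_D^{\partial}(a))$ and $\kappa_D(b)=(h_D(b),h_D^{\partial}(b))$ already violate $\le$; since the order on $M_2$ is the (Boolean) product order, $\kappa_D(a)\not\le\kappa_D(b)$, as needed.

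With condition~(i) in hand, Observation~\ref{obsik} yields that $i_{\mathbf A}$ is order reflecting; being order preserving and order reflecting it is in particular injective, hence an order isomorphism of $\mathbf A$ onto $i_{\mathbf A}(A)$. Finally $i_{\mathbf A}(A)$ is closed under the negation of $\mathbf M_2^{T_{\mathbf A}^{\text{DMP}}}$, since $(i_{\mathbf A}(a))'=i_{\mathbf A}(a')\in i_{\mathbf A}(A)$, and it contains $o=i_{\mathbf A}(0)$ and $j=i_{\mathbf A}(1)$; so $i_{\mathbf A}(A)$ is a De Morgan subposet of $\mathbf M_2^{T_{\mathbf A}^{\text{DMP}}}$, exactly as in the identification remarked upon after Observation~\ref{obsik}. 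The proof is short, and the only delicate point is the choice of separating map: one must separate inside morphisms of \dMp{}s valued in $\mathbf M_2$, not merely by $\{0,1\}$-valued monotone maps, but the parametrisation $\kappa_D=(h_D,h_D^{\partial})$ of $T_{\mathbf A}^{\text{DMP}}$ packages precisely such morphisms, so principal down-sets already suffice and no distributivity of $\mathbf A$ is used.
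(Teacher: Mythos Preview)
Your proposal is correct and follows essentially the same approach as the paper: verify that each $\kappa_D$ is a morphism of \dMp{}s, and then separate $a\not\le b$ using the principal down-set $D={\downarrow b}$, which is exactly the choice made in the paper's proof (the paper phrases it in the direct form ``$s(a)\le s(b)$ for all $s$ implies $a\le b$'' rather than the contrapositive). Your explicit check that $D$ is proper (via $b\ne 1$) and your closing remark that $i_{\mathbf A}(A)$ is closed under $'$ and contains $o,j$ are small points the paper leaves implicit, but otherwise the arguments coincide.
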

\begin{proof} First, let us show that, for any proper   down-sets $D$ of  $\mathbf{A}$, 
the mapping $\kappa_D\colon{} A\to M_2$ is a morphism of De Morgan posets. Since 
both ${h_{D}}$ and ${h_{D}}^{\partial}$ are order-preserving we have that 
$\kappa_D$ is order-preserving. Now, let us compute 
$$\begin{array}{l}
{h_{D}}^{\partial}(0)=({h_{D}}(0'))'=({h_{D}}(1))'=1'=0,\\
{h_{D}}^{\partial}(1)=({h_{D}}(1'))'=({h_{D}}(0))'=0'=1.
\end{array}
$$
It follows that both ${h_{D}}$ and ${h_{D}}^{\partial}$ preserve $0$ and $1$, i.e., 
they are morphisms of bounded posets. Hence $\kappa_D$ is 
a morphism of bounded posets. 

Let $a\in A$. Let us check that ${{\kappa}_{D}}(a')={{\kappa}_{D}}(a)'$. 
We have 
$$
\begin{array}{r c l}
{{\kappa}_{D}}(a)'&=&({h_{D}}(a), {h_{D}}^{\partial}(a))'\\
&=&%
({h_{D}}^{\partial}(a)', {h_{D}}(a)')= %
({h_{D}}(a'), {h_{D}}(a'')')\\
&=&({h_{D}}(a'), {h_{D}}^{\partial}(a'))=%
{{\kappa}_{D}}(a').
\end{array}
$$
This yields that $\kappa_D$ is a morphism of De Morgan posets. 

It remains to check that $T_{\mathbf A}^{\text{DMP}}$ a full set of morphisms. 
Let $a, b\in A$ such that $s(a)\leq s(b)$ for all $s\in T_{\mathbf A}^{\text{DMP}}$.
In particular, ${h_{D}}(a)\leq {h_{D}}(b)$ for all proper   down-sets $D$ of  $\mathbf{A}$. Put $D=\{x\in A\mid x\leq b\}$. 
Since  $ {h_{D}}(b)=0$ we have $ {h_{D}}(a)=0$, i.e., 
$a\in D$ and hence $a\leq b$.
\end{proof}

The next theorem solves our problem of finding 
the binary relation $R$ on  $T_{\mathbf A}^{\text{DMP}}$. 
In fact, we are restricted here on a semi-tense operator 
$G$ only.

\begin{theorem}\label{semiDMPdreprest} 
 Let   ${\mathbf A}=(A;\leq, ', 0, 1)$  be  
a De Morgan poset, 
 $G\colon{}A\to A$ a semi-tense operator on ${\mathbf A}$. 
 Let us put 
 $$
 \begin{array}{r@{\,}c@{\,}l}
 R_{G}&=&\{(s, t)\in %
 T_{{\mathbf A}}^{\text{DMP}}\times T_{{\mathbf A}}^{\text{DMP}}\mid 
(\forall x\in \mathop{dom}(G))\\
& &\phantom{\{(s, t)\in  T_{{\mathbf A}}^{\text{DMP}}\times T_{{\mathbf A}}^{\text{DMP}}\mid\ }%
(s(G(x))\leq t(x))\}.\\  
\end{array}$$

Then $(T_{{\mathbf A}}^{\text{DMP}},R_{G})$ is 
a  frame with $R$ being serial. 
Let 
$\widehat{G}$  be 
the operator constructed by means 
of the  frame $(T_{{\mathbf A}}^{\text{DMP}},R_{G})$ 
and let us put $\widehat{F}={\widehat{G}}^{\partial}$.
Then the mapping  $i_{{}{{\mathbf A}}}$ is 
an order-reflecting morphism of De Morgan posets 
into the complete De Morgan lattice 
${\mathbf  M}_2^{T_{{\mathbf A}}^{\text{DMP}}}$ 
such that the following diagram commutes:
$$
\begin{diagram}
{A}&%
\lTo(2,0)^{{}{F}}&{A}&%
\rTo(2,0)^{{}{G}}&{A}&&\\
\dTo(0,3)^{i_{{}{{\mathbf  A}}}}&&\dTo(0,3)^{i_{{}{{\mathbf  A}}}}&&%
\dTo(0,3)_{i_{{}{{\mathbf  A}}}}&&\\
{\mathbf  M}_2^{T_{{\mathbf A}}^{\text{DMP}}}&\lTo(2,0)_{\widehat{F}}&
{\mathbf  M}_2^{T_{{\mathbf A}}^{\text{DMP}}}&\rTo(2,0)_{\widehat{G}}&%
{\mathbf  M}_2^{T_{{\mathbf A}}^{\text{DMP}}}&&
\end{diagram}
$$
\end{theorem}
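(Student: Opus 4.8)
The plan is to verify, in turn, the three assertions of the statement: (1) that $R_G$ is serial; (2) that $i_{\mathbf A}$ intertwines $G$ with $\widehat G$; and (3) that it likewise intertwines $F$ with $\widehat F$. The fact that $i_{\mathbf A}$ is an order-reflecting morphism of De Morgan posets into ${\mathbf M}_2^{T_{\mathbf A}^{\text{DMP}}}$ is already Proposition \ref{distDMPvloyeni}, so only the commutativity of the diagram and seriality remain.

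For seriality, fix $s=\kappa_D\in T_{\mathbf A}^{\text{DMP}}$; I must produce $t$ with $s\mathrel{R_G}t$, i.e.\ $s(G(x))\le t(x)$ for every $x\in\mathop{dom}(G)$. The natural candidate is $t=\kappa_E$ where $E$ is the down-set generated by $\{x\in\mathop{dom}(G)\mid s(G(x))\not\le t(x)\text{ is to be avoided}\}$ — more precisely, let $E=\{a\in A\mid a\le x\text{ for some }x\in\mathop{dom}(G)\text{ with }h_D(G(x))=0\}\;\cap\;(\text{correction to stay proper})$. The key checks are that $E$ is a \emph{proper} down-set (this is where one uses $G(1)=1$ and $G(0)=0$ from (P1), so that $1\notin E$) and that the resulting $\kappa_E$ really satisfies the defining inequality of $R_G$ in both coordinates; the second coordinate $h_E^{\partial}$ must be handled via $\partial(E)$ and the interplay of $G^{\partial}=F$ with the componentwise negation, using monotonicity (P2) and the semi-tense axiom (P4). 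I expect this to be the main obstacle: choosing $E$ so that it is simultaneously proper, a down-set, and forces $s(G(x))\le\kappa_E(x)$ on the whole domain of $G$, while keeping control of the $h^{\partial}$-component.

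For the commutativity of the right-hand square, I need $i_{\mathbf A}(G(x))=\widehat G(i_{\mathbf A}(x))$ for all $x\in\mathop{dom}(G)$; evaluating both sides at $s\in T_{\mathbf A}^{\text{DMP}}$, the left side is $s(G(x))\in M_2$ and the right side is $\bigwedge_{M_2}\{\,s'(x)\mid s\mathrel{R_G}s'\,\}$, so the identity to prove is
$$
s(G(x))=\bigwedge_{M_2}\{\,t(x)\mid (s,t)\in R_G\,\}.
$$
The inequality $s(G(x))\le t(x)$ for every $t$ with $s\mathrel{R_G}t$ is immediate from the definition of $R_G$, giving $s(G(x))\le\bigwedge$. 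For the reverse inequality it suffices to exhibit, for the given $s$ and $x$, some $t$ with $s\mathrel{R_G}t$ and $t(x)\le s(G(x))$; this is a sharper version of the seriality construction, where $E$ is chosen relative to the particular $x$ (roughly, $E$ should be a maximal proper down-set with $x\notin E$ adapted so that $s(G(y))\le\kappa_E(y)$ for all $y\in\mathop{dom}(G)$ — note $G(x)$ itself may fail to lie in $D$, and one must separate the two coordinates of $M_2$ and treat $h$ and $h^{\partial}$ symmetrically). Here one leans on Observation \ref{obsik} / Proposition \ref{distDMPvloyeni} to know that enough such down-sets exist.

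Finally, the left-hand square: since $F=G^{\partial}$ and $\widehat F=\widehat G^{\partial}$ by definition, and since $i_{\mathbf A}$ is a morphism of De Morgan posets (hence commutes with $'$), commutativity of the $F$-square follows formally from that of the $G$-square by applying $'$ on both sides and using $h^{\partial}(a)=h(a')'$ together with the componentwise negation on $M_2^{T_{\mathbf A}^{\text{DMP}}}$; one just has to check that $x\in\mathop{dom}(F)$ exactly when $x'\in\mathop{dom}(G)$, which is the definition of the dual of a partial map. So the real content is entirely in steps (1) and (2), and within those in the combinatorial choice of the separating down-set $E$.
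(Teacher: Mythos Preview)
Your outline is structurally correct and matches the paper's approach, but you leave the central technical step unfinished---you flag the construction of the separating down-set as ``the main obstacle'' and then do not carry it out. The paper resolves it as follows: given $s=\kappa_D$ and $b\in\mathop{dom}(G)$ with $h_D(G(b))=0$, set
\[
V=\{z\in A\mid \exists\,x\in\mathop{dom}(G),\ z\ge x,\ h_D(G(x))=1\}
\]
(an up-set) and
\[
X=\{z\in A\mid \exists\,y\in\mathop{dom}(F),\ z\le y,\ h_D(F(y))=0\}\cup{\downarrow}b
\]
(a down-set containing $b$). The crucial disjointness $V\cap X=\emptyset$ is exactly where axiom (P4) enters: if $z\in V\cap X$ then some $x\le z\le y$ with $h_D(G(x))=1$ and $h_D(F(y))=0$, contradicting $G(x)\le F(y)$. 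One then takes a \emph{maximal} down-set $U\supseteq X$ disjoint from $V$; maximality forces $h_D(G(x))\le h_U(x)$ on all of $\mathop{dom}(G)$, and the inclusion $X\subseteq U$ handles the $h^{\partial}$-coordinate via $F$. The case $h_D^{\partial}(G(b))=0$ is reduced to the first by passing to $\partial(D)$. Note also that you do \emph{not} need a single $t$ with $t(b)\le s(G(b))$: since $M_2$ is not a chain, the paper produces (in general) two different witnesses, one forcing each coordinate to $0$, and the infimum over both does the job.

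One organisational point worth adopting: the paper does \emph{not} prove seriality separately. It first establishes the identity $s(G(b))=\bigwedge_{M_2}\{t(b)\mid s\mathrel{R_G}t\}$ for all $b\in\mathop{dom}(G)$, and then seriality drops out by taking $b=0$: from (P1) one has $s(G(0))=0$, whereas the infimum over an empty index set would be $1$. This avoids duplicating the down-set construction you were attempting in your step (1). Your step (3), deriving the $F$-square from the $G$-square by duality, is exactly what the paper does.
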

\begin{proof} First, let us  verify that 
the following holds: 
\begin{enumerate}
\item  for all $b\in \mathop{dom}(G)$ and 
for all $s\in T_{{\mathbf A}}^{\text{DMP}}$, 
$s(G(b))=\bigwedge_{M_2}\{t(b)\mid  s \mathrel{R_{G}} t\}$,
\item  for all $b\in  \mathop{dom}(F)$ 
and for all $s\in T_{{\mathbf A}}^{\text{DMP}}$, 
$s(F(b))=\bigvee_{M_2}\{s(b)\mid  s \mathrel{R_{G}} t\}$.
\end{enumerate}

Let us first check statement 1. 

Assume that $b\in \mathop{dom}(G)$ 
and $s\in T_{{\mathbf A}}^{\text{DMP}}$,  
$s=\kappa_D$  where $D$ is a proper   down-set of  $\mathbf{A}$. 
Then, for all $t\in T_{{\mathbf B}}^{\text{DMP}}$ such that  
$s \mathrel{R_G} t$, $s(G(b)) \leq t(b)$. Hence 
$$(h_D(G(b)), h_D^{\partial}(G(b)))%
\leq \mbox{$\bigwedge_{M_2}$}\{t(b)\mid  s \mathrel{R_G} t\}.$$ 
To get the other 
inequality assume that 
$h_D(G(b))=0$ or  $h_D^{\partial}(G(b))=0$. 

Assume that $h_D(G(b))=0$.
Put $V=\{z\in A\mid$ $(\exists x\in \mathop{dom(G)})(z\geq x\ 
\text{and}\ h_D(G(x))=1)\}$ and 
$X=\{z\in A\mid (\exists y\in \mathop{dom(F)})(z\leq y$   
$\text{and}\ h_D(F(y))=0)\ \text{or}\ z\leq b\}$. Then 
$V$ is a proper upper subset  of ${\mathbf A}$, $1\in V$, and 
$X$ is a proper downset of ${\mathbf A}$, $0\in X$ such 
that $X\cap V=\emptyset$ and $b\in X$. To verify this, 
assume that there is an element 
$z\in X\cap V$. Then there is $x\in \mathop{dom(G)}$, $x\leq z$ 
such that $h_D(G(x))=1$.    
Also, we have that 
there is $y\in \mathop{dom(F)}$, $z\leq y$ 
such that $ h_D(F(y))=0$ or $x\leq z\leq b$). It follows 
that $x\leq y$ and  $1=h_D(G(x))\leq h_D(F(y))=0$ 
or $h_D(G(b))=1$, 
a contradiction.

Let $U$ be a maximal down-set of ${\mathbf A}$ 
including $X$ such that  $V\cap U=\emptyset$.
Hence $U$ determines 
a morphism  $h_U\colon{}A\to \{ 0,1\}$  
of bounded posets 
such that ${h_{U}}(z)=0$  for all 
$z\in X$ and ${h_{U}}(z)=1$ for all 
$z\in V$, i.e.,  $h_D(G(x))\leq {h_{U}}(x)$ for all 
$x\in \mathop{dom(G)}$. 
Let us check that 
${h_{D}}^{\partial}(G(x))\leq {h_{U}}^{\partial}(x)$ for 
all $x\in \mathop{dom(G)}$.  
Assume that $1={h_{D}}^{\partial}(G(x))=h_D(F(x'))'$. Then 
$h_D(F(x'))=0$, i.e., $x'\in X$. It follows that ${h_{U}}(x')=0$, i.e., 
${h_{U}}^{\partial}(x)=1$.
But this yields that 
$(\forall x\in \mathop{dom}(G)) %
\phantom{\{}(s(G(x))\leq \kappa_U(x))$, i.e., 
$s \mathrel{R_G}  \kappa_U=(h_U(-),  {h_{U}}^{\partial}(-))$
and ${h_{U}}(b)=0$.

Assume now that 
$h_D^{\partial}(G(b))=h_{{\partial}(D)}(G(b))=0$.  
As above, there is a maximal downset $W$ of ${\mathbf A}$ such that 
${h_{\partial(W)}}^{\partial}(b)= {h_{W}}(b)=0$, 
$$h_{{\partial}(D)}(G(x))\leq {h_{W}}(x)\ \text{and }\ 
h_{{\partial}(D)}^{\partial}(G(x))\leq {h_{W}}^{\partial}(x)$$ 
 for all $x\in \mathop{dom}(G)$. It follows that 
$h_D(G(x))\leq {h_{\partial(W)}}(x)$ and 
${h_{D}}^{\partial}(G(x))\leq {h_{\partial(W)}}^{\partial}(x)$ for all $x\in B$, 
i.e.,  $s \mathrel{R_{G}} \kappa_{\partial(W)}$ and 
${h_{\partial(W)}}^{\partial}(b)=0$.

Consequently, $s(G(b))= \bigwedge_{M_2}\{t(b)\mid  s \mathrel{R_G} t\}$.

Let us check Statement 2. We have 
$$
\begin{array}{@{}r@{}l}
s(F(&b))=s(G(b')')=s(G(b'))'\\[0.15cm]%
&=(\bigwedge_{M_2}\{t(b')\mid  s \mathrel{R_{G}} t\})'\\[0.15cm]%
&=\bigvee_{M_2}\{t(b')'\mid  s \mathrel{R_{G}} t\}%
=\bigvee_{M_2}\{t(b)\mid  s \mathrel{R_{G}} t\}.
\end{array}
$$

It remains to verify that $R_G$ is serial. 
Let $s\in  T_{{\mathbf A}}^{\text{DMP}}$. We know from $(P1)$ that 
$0=s(G(0))= \bigwedge_{M_2}\{t(0)\mid  s \mathrel{R_G} t\}$. 
The set $\{t\in T_{{\mathbf A}}^{\text{DMP}}\mid  s \mathrel{R_G} t\}$ 
is non-empty (otherwise one has $0=s(G(0))=1$, a contradiction). 
\end{proof}

In what follows, we show that if $G$ and $H$ are semi-tense operators 
such that the induced relations $R_G$ and $R_H$ satisfy a natural 
condition $R_G=(R_H)^{-1}$ then the obtained frame is just the one  
we asked for.

\begin{theorem}\label{seDMPdreprest} 
 Let   ${\mathbf A}=(A;\leq, ', 0, 1)$  be  
a De Morgan poset, 
 $G, H\colon{}A\to A$ be  semi-tense operators on ${\mathbf A}$ 
such that $R_G=(R_H)^{-1}$.

Then $(T_{{\mathbf A}}^{\text{DMP}},R_{G})$ is a  frame with $R_G$ 
and $ (R_G)^{-1}$ serial. 
Let 
$(\mathbf{M}_2^{T_{{\mathbf A}}^{\text{DMP}}};\widehat{G},\widehat{H})$
be the dynamic De Morgan algebra constructed by means 
of the  frame $(T_{{\mathbf A}}^{\text{DMP}},R_{G})$.
Then the mapping  $i_{{}{{\mathbf A}}}$ is 
an order-reflecting morphism of De Morgan posets 
into the complete De Morgan lattice 
${\mathbf  M}_2^{T_{{\mathbf A}}^{\text{DMP}}}$ 
such that the following diagram commutes:
$$
\begin{diagram}
{A}&%
\lTo(2,0)^{{}{H}}&{A}&%
\rTo(2,0)^{{}{G}}&{A}&&\\
\dTo(0,3)^{i_{{}{{\mathbf  A}}}}&&\dTo(0,3)^{i_{{}{{\mathbf  A}}}}&&%
\dTo(0,3)_{i_{{}{{\mathbf  A}}}}&&\\
{\mathbf  M}_2^{T_{{\mathbf A}}^{\text{DMP}}}&\lTo(2,0)_{\widehat{H}}&
{\mathbf  M}_2^{T_{{\mathbf A}}^{\text{DMP}}}&\rTo(2,0)_{\widehat{G}}&%
{\mathbf  M}_2^{T_{{\mathbf A}}^{\text{DMP}}}&&
\end{diagram}
$$
\end{theorem}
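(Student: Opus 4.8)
The plan is to reduce this theorem to the previous one, Theorem~\ref{semiDMPdreprest}, applied twice. First I would invoke Theorem~\ref{semiDMPdreprest} for the semi-tense operator $G$: this already gives that $(T_{{\mathbf A}}^{\text{DMP}},R_G)$ is a frame with $R_G$ serial, that $\widehat{G}$ constructed from this frame satisfies $i_{{}{{\mathbf A}}}\circ G=\widehat{G}\circ i_{{}{{\mathbf A}}}$ on $\mathop{dom}(G)$, and correspondingly for $\widehat{F}={\widehat{G}}^{\partial}$ and $F=G^{\partial}$. Next I would apply the same Theorem~\ref{semiDMPdreprest} to the semi-tense operator $H$, obtaining that $(T_{{\mathbf A}}^{\text{DMP}},R_H)$ is a frame with $R_H$ serial and that $\widehat{H}'$ constructed from $(T_{{\mathbf A}}^{\text{DMP}},R_H)$ satisfies $i_{{}{{\mathbf A}}}\circ H=\widehat{H}'\circ i_{{}{{\mathbf A}}}$ on $\mathop{dom}(H)$.

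The key observation is the hypothesis $R_G=(R_H)^{-1}$. This immediately yields that $(R_G)^{-1}=R_H$ is serial, so both $R_G$ and $(R_G)^{-1}$ are serial; by Theorem~\ref{upcomplate}(d) the triple $(\mathbf{M}_2^{T_{{\mathbf A}}^{\text{DMP}}};\widehat{G},\widehat{H})$ constructed by means of the single frame $(T_{{\mathbf A}}^{\text{DMP}},R_G)$ is a dynamic De Morgan algebra, where here $\widehat{H}$ denotes the operator built from $R_G$ via the $y\mathrel{R_G}x$ clause of Theorem~\ref{upcomplate}. The point I must then check is that this $\widehat{H}$ coincides with the operator $\widehat{H}'$ obtained above from the frame $(T_{{\mathbf A}}^{\text{DMP}},R_H)$ via its $x\mathrel{R_H}y$ clause: indeed for $p\in M_2^{T_{{\mathbf A}}^{\text{DMP}}}$ and $s$ we have $\widehat{H}(p)(s)=\bigwedge_{M_2}\{p(t)\mid t\mathrel{R_G}s\}=\bigwedge_{M_2}\{p(t)\mid s\mathrel{R_H}t\}=\widehat{H}'(p)(s)$, the middle equality being exactly $R_G=(R_H)^{-1}$. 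Hence the square for $H$ established via $\widehat{H}'$ is literally the square for $H$ with the $\widehat{H}$ of the claimed frame, and the commuting diagram in the statement is assembled by pasting the $G$-square (and $F$-square, which degenerates here but is implicit in the De Morgan structure) from the first application with the $H$-square just identified. Order-reflection of $i_{{}{{\mathbf A}}}$ is already in Proposition~\ref{distDMPvloyeni}.

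The main obstacle, and the only place requiring genuine care rather than bookkeeping, is confirming that the two a~priori different constructions of the ``$H$-type'' operator agree, i.e.\ that running Theorem~\ref{semiDMPdreprest} with $H$ in the role of $G$ really produces the operator $p\mapsto\bigl(s\mapsto\bigwedge_{M_2}\{p(t)\mid t\mathrel{R_G}s\}\bigr)$ once the relabeling $R_H=(R_G)^{-1}$ is made. This is a matter of unwinding the definition of $R_H$ from Theorem~\ref{semiDMPdreprest} ($s\mathrel{R_H}t$ iff $s(H(x))\le t(x)$ for all $x\in\mathop{dom}(H)$) and matching it against the $\widehat{H}$-clause of Theorem~\ref{upcomplate}; no new estimates are needed, only the symmetry hypothesis. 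Once this identification is in hand, commutativity of the diagram for $H$ is just the $H$-instance of Theorem~\ref{semiDMPdreprest}, and the theorem follows.
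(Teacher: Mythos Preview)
Your proposal is correct and follows exactly the approach of the paper, which simply records that the result ``immediately follows from Theorem~\ref{upcomplate} and Theorem~\ref{semiDMPdreprest}.'' You have merely unpacked this one-line proof, in particular making explicit the identification $\widehat{H}'=\widehat{H}$ via $R_G=(R_H)^{-1}$ that the paper leaves implicit.
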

\begin{proof} It immediately follows from Theorem \ref{upcomplate}  and Theorem 
\ref{semiDMPdreprest}.
\end{proof}

The following theorem gives us a complete solution 
of our problem established in the beginning of this section. This is a new result 
showing that the partial dynamic \dMa{} can be equipped 
with the corresponding frame similarly as it is 
known for Boolean algebras in \cite{1} at least in a case 
when the tense operators $G$ and $H$ are interrelated. 
In particular, any dynamic  \dMa{} has such a frame.

\begin{theorem}\label{sercDMPdreprest} 
 Let   $({\mathbf A};G,H)$  be  
a partial dynamic De Morgan algebra such that 
\begin{enumerate}[(a) ] 
\item $x\in \mathop{dom}(G)$ implies $G(x)'\in \mathop{dom}(H)$, 
\item $x\in \mathop{dom}(H)$ implies $H(x)'\in \mathop{dom}(G)$. 
\end{enumerate}

Then $(T_{{\mathbf A}}^{\text{DMP}},R_{G})$ is a  frame with $R_G$ 
and $ (R_G)^{-1}$ serial such that 
$({\mathbf A};G,H)$ can be embedded into 
$(\mathbf{M}_2^{T_{{\mathbf A}}^{\text{DMP}}};\widehat{G},\widehat{H})$.
\end{theorem}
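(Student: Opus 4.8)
The plan is to reduce Theorem~\ref{sercDMPdreprest} to Theorem~\ref{seDMPdreprest} by verifying its single hypothesis, namely that the relations induced by $G$ and $H$ satisfy $R_G=(R_H)^{-1}$. First I would record the trivial observation that a partial dynamic De Morgan algebra has, by definition, semi-tense operators $G$ and $H$ (the conditions (P1), (P2), (P4) are exactly what is needed), so the machinery of Theorem~\ref{semiDMPdreprest} applies to each of $G$ and $H$ separately; in particular $R_G$ and $R_H$ are serial. Once $R_G=(R_H)^{-1}$ is established, seriality of $(R_G)^{-1}=R_H$ is immediate, and Theorem~\ref{seDMPdreprest} produces the commuting diagram, which after using that $i_{\mathbf A}$ is order-reflecting (Proposition~\ref{distDMPvloyeni}) gives the asserted embedding of $({\mathbf A};G,H)$ into $(\mathbf{M}_2^{T_{\mathbf A}^{\text{DMP}}};\widehat G,\widehat H)$.

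The heart of the argument is therefore the identity $R_G=(R_H)^{-1}$, and here assumptions (a) and (b) enter. Unwinding the definitions, $(s,t)\in R_G$ means $s(G(x))\le t(x)$ for all $x\in\dom(G)$, while $(t,s)\in R_H$ means $t(H(y))\le s(y)$ for all $y\in\dom(H)$. To pass from one to the other I would use the adjunction-type interplay between $G$ and $P=H^\partial$ (and between $H$ and $F=G^\partial$) encoded in (P3) and (P4): concretely, for $x\in\dom(G)$ the element $y=G(x)'$ lies in $\dom(H)$ by (a), so $H(y)=H(G(x)')$ is defined, and one shows, using (P3)/(P4) and the fact that each $s,t\in T_{\mathbf A}^{\text{DMP}}$ is a De Morgan poset morphism into $\mathbf M_2$ (hence commutes with $'$ and is order-preserving), that the inequality $s(G(x))\le t(x)$ for all $x\in\dom(G)$ is equivalent to $t(H(y))\le s(y)$ for all $y\in\dom(H)$. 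The symmetry of the roles of $G,H$ together with (b) gives the reverse containment, yielding $R_G=(R_H)^{-1}$.

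I expect the main obstacle to be precisely this translation step: one must carefully match the ``existential'' operators $F,P$ appearing in (P3)--(P4) with the negation-conjugates $G^\partial,H^\partial$, track which composites are guaranteed to be defined (this is exactly what hypotheses (a),(b) buy us, since without them $H(G(x)')$ or $G(H(x)')$ need not exist and the equivalence can fail), and use that morphisms into $\mathbf M_2$ are order-reflecting only collectively, not individually — so the argument is genuinely about the whole family $T_{\mathbf A}^{\text{DMP}}$, not a single valuation. Everything else — seriality, commutativity of the diagram, the embedding — is then inherited verbatim from Theorems~\ref{semiDMPdreprest} and~\ref{seDMPdreprest}. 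The final sentence of the proof would observe that a (total) dynamic De Morgan algebra automatically satisfies (a) and (b), since $\dom(G)=\dom(H)=A$, so the theorem specializes to the promised statement that every dynamic De Morgan algebra is set representable.
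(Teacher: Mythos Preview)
Your plan is correct and matches the paper's proof: reduce to Theorem~\ref{seDMPdreprest} by establishing $R_G=(R_H)^{-1}$, using (P3) together with hypotheses (a) and (b) and the fact that each $s,t\in T_{\mathbf A}^{\text{DMP}}$ commutes with negation. One small clarification: the element $y=G(x)'$ obtained from (a) gives you only the inclusion $(R_H)^{-1}\subseteq R_G$ (via $x'\le HF(x')=H(G(x)')$ from (P3)), not both directions at once; the inclusion $R_G\subseteq (R_H)^{-1}$ genuinely needs (b), taking $x=H(y)'\in\dom(G)$ and using $y'\le GP(y')=G(H(y)')$ --- this is exactly how the paper argues, and (P4) plays no role in this step beyond ensuring that $G,H$ are semi-tense so that Theorem~\ref{semiDMPdreprest} applies.
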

\begin{proof} It is enough to check that $R_G=(R_H)^{-1}$.

Let $(s,t)\in R_G$, i.e., 
$(\forall x\in \mathop{dom}(G))(s(G(x))\leq t(x))$. 
We have to check that $(t,s)\in R_H$. Let $y\in \mathop{dom}(H)$. 
Then by assumption (b) we obtain that 
$H(y)'\in \mathop{dom}(G)$. It follows that 
$G(H(y)')'=F(H(y)$ is defined and by axiom (P3) we get 
that $G(H(y)')'\leq  y$, i.e., $y'\leq G(H(y)')$. Since 
$(s,t)\in R_G$ we obtain that 
$s(y')\leq s(G(H(y)'))\leq t(H(y)')$. But $s$ and $t$ are morphisms 
of bounded \dMp{s} which yields that 
$ t(H(y))\leq s(y)$. Hence $R_G\subseteq (R_H)^{-1}$. 
A symmetry argument  gives us that 
$R_H\subseteq (R_G)^{-1}$, i.e., $R_G=(R_H)^{-1}$.
\end{proof}

From Theorem \ref{sercDMPdreprest}  we obtain the following. 

\begin{corollary}\label{totsercDMPdreprest} 
 Let   $({\mathbf A};G,H)$  be  
a dynamic De Morgan algebra.
Then $(T_{{\mathbf A}}^{\text{DMP}},R_{G})$ is a  frame with $R_G$ 
and $ (R_G)^{-1}$ serial such that 
$({\mathbf A};G,H)$ can be embedded into 
$(\mathbf{M}_2^{T_{{\mathbf A}}^{\text{DMP}}};\widehat{G},\widehat{H})$.
\end{corollary}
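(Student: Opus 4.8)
The plan is to derive the statement from Theorem~\ref{sercDMPdreprest} by checking that its two standing hypotheses are vacuously satisfied in the totally-defined case. Recall from Definition~\ref{dadM} that a dynamic De Morgan algebra is precisely a partial dynamic De Morgan algebra $(\mathbf{A};G,H)$ in which the partial operators $G$ and $H$ happen to be total, i.e. $\mathop{dom}(G)=\mathop{dom}(H)=A$. Hence $(\mathbf{A};G,H)$ meets the general framework of Theorem~\ref{sercDMPdreprest}, and it remains only to verify conditions (a) and (b) of that theorem.

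For condition (a): if $x\in\mathop{dom}(G)=A$, then in particular $G(x)$ is an element of $A$, hence $G(x)'\in A=\mathop{dom}(H)$, so $G(x)'\in\mathop{dom}(H)$ as required. Condition (b) follows symmetrically: for $x\in\mathop{dom}(H)=A$ we have $H(x)'\in A=\mathop{dom}(G)$. Thus both hypotheses hold trivially, with no calculation involving the axioms (P1)--(P4) needed at this stage.

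With (a) and (b) in hand, Theorem~\ref{sercDMPdreprest} applies verbatim and yields that $(T_{{\mathbf A}}^{\text{DMP}},R_{G})$ is a frame with $R_G$ and $(R_G)^{-1}$ serial, and that $i_{\mathbf A}$ realizes an embedding of $(\mathbf{A};G,H)$ into $(\mathbf{M}_2^{T_{{\mathbf A}}^{\text{DMP}}};\widehat{G},\widehat{H})$; the intertwining $i_{\mathbf A}\circ G=\widehat{G}\circ i_{\mathbf A}$ and $i_{\mathbf A}\circ H=\widehat{H}\circ i_{\mathbf A}$ is exactly the commuting square established there. This completes the proof.

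There is essentially no obstacle at the level of the corollary itself; the entire technical burden has already been discharged in Proposition~\ref{distDMPvloyeni} (order-reflection of $i_{\mathbf A}$ and that $T_{\mathbf A}^{\text{DMP}}$ is a full set of morphisms), in Theorem~\ref{semiDMPdreprest} (the pointwise meet/join formulas for $\widehat{G}$, $\widehat{F}$ and the seriality of $R_G$), and in Theorem~\ref{sercDMPdreprest} (the identity $R_G=(R_H)^{-1}$, which is where (P3) and the interrelation of $G$ and $H$ are genuinely used). The only point worth a sentence of care is to state explicitly that totality of $G$ and $H$ is what makes (a) and (b) automatic, so that no extra interrelation hypothesis on the dynamic algebra is needed — in contrast to the partial case, where those conditions are a real restriction.
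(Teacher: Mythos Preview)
Your proof is correct and follows exactly the paper's approach: the paper simply states that the corollary follows from Theorem~\ref{sercDMPdreprest}, and you have spelled out the one-line reason, namely that totality of $G$ and $H$ makes $\mathop{dom}(G)=\mathop{dom}(H)=A$, so conditions (a) and (b) of that theorem hold trivially.
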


\begin{remark} Usually, one uses complex algebras associated with the given model 
to establish a discrete duality between algebraic and relational models. Figallo and Pelaitay in \cite{figallo}  
established a discrete duality between tense distributive   De Morgan  algebras and so-called tense De Morgan spaces. 
 Since we are only interested in the representation of tense operators we 
use relational models without any additional structure. 
\end{remark}

\section*{Acknowledgements}  
This is a pre-print of an article published as \newline
 I. Chajda, J. Paseka, Set representation of partial dynamic De Morgan
algebras, In: Proceedings of the 46th IEEE International Symposium on
Multiple-Valued Logic, Springer, (2016), 119--124, doi: 10.1109/ISMVL.2013.56. 
The final authenticated version of the article is available online at: 
\newline 
https://ieeexplore.ieee.org/stamp/stamp.jsp?tp=\&\-arnumber=6524667.

Both authors acknowledge the support by a bilateral project 
New Perspectives on Residuated Posets  financed by  
Austrian Science Fund (FWF): project I 1923-N25, 
and the Czech Science Foundation (GA\v CR): project 15-34697L. 
J.~Paseka acknowledges the financial support of the Czech Science Foundation
(GA\v CR) un\-der the grant  Algebraic, Many-valued and Quantum Structures for Uncertainty Modelling:  
project  15-15286S.


\begin{thebibliography}{99}
\bibitem{Balbes}
{R. Balbes, P. Dwinger},  {Distributive lattices},  
University of Missouri Press, Columbia, 1974. 


\bibitem{botur}{M.~Botur, I.~Chajda, R.~Hala\v s, M.~Kola\v r\' ik}, 
{Tense operators on  Basic Algebras}, 
International Journal of Theoretical Physics \textbf{50} (2011),  3737--3749.

\bibitem{1} J. Burgess, Basic tense logic, 
In: Handbook of Philosophical Logic, vol. II
  (D. M. Gabbay, F. G\"{u}nther, eds.),
  D. Reidel Publ. Comp., 1984, ~89--139.


\bibitem{chajda} I. Chajda, {Algebraic axiomatization 
of tense intuitionistic logic}, Central European Journal of Mathematics \textbf{9} 
 (2012), 1185--1191.

\bibitem{chajdakolarik} I. Chajda, M. Kola\v{r}\'{i}k,  
 {Dynamic effect algebras}, Mathematica Slovaca \textbf{62} (2012), 379--388.
 
\bibitem{dyn} I. Chajda, J. Paseka, Dynamic Effect 
Algebras and their Representations, Soft Computing  \textbf{16} (2012), 1733--1741.


\bibitem{dem} I. Chajda, J. Paseka,  {Tense Operators and Dynamic De Morgan Algebras},  
 In: Proc. 2013 IEEE 43rd Internat. Symp. Multiple-Valued Logic,  
Springer,  (2013), 219--224.

\bibitem{dynpos} I. Chajda, J. Paseka, 
Dynamic Order Algebras as an Axiomatization  of Modal and Tense Logics, 
{International Journal of Theoretical Physics}, 
 \textbf{54} (2015),  4327--4340.

\bibitem{chirita}{C.~Chiri\c t\u a}, {Tense $\theta$-valued Moisil propositional logic}, Int. J. 
of Computers, Communications and Control {\bf 5} (2010), 642-653.

\bibitem{2} D. Diaconescu, G. Georgescu, Tense Operators on 
MV-Algebras and \L{}ukasiewicz-Moisil Algebras,
  Fundamenta Informaticae \textbf{81}  (2007), 379--408.

\bibitem{dvurec} A. Dvure{\v{c}}enskij, S. Pulmannov\'{a}, New Trends in Quantum Structures, Kluwer Acad. Publ., Dordrecht/Ister Science, Bratislava 2000. 


\bibitem{Ewald} W.B. Ewald, {Intuitionistic Tense and Modal Logic}, 
Journal of Symbolic Logic  \textbf{51}  (1986),  166--179.

\bibitem{figallo} {A.V. Figallo, G. Pelaitay},  
{Tense operators on De Morgan algebras},   
Logic Journal of the IGPL  {\bf 22} (2014),  255--267.

\bibitem{FoBe} D.~J.~Foulis, M.~K. Bennett, Effect algebras and unsharp quantum logics,
Foundations of Physics \textbf{24} (1994), 1325--1346.



\bibitem{dyn2}  J. Paseka, J. Janda, A Dynamic Effect Algebras with dual operation,  
Mathematics for Applications  \textbf{1} (2012), 79--89.

\bibitem{Petrovich} {A. Petrovich}, 
{Monadic De Morgan algebras}, In: C. Xavier, and M. Carlos (eds.),
Models, Algebras, and Proofs. Lecture Notes in Pure and Applied Mathematics {\bf 203} (1999),
pp. 315--333, CRC Press.


\bibitem{wijesekera} D.~Wijesekera,  {Constructive Modal Logics I}, Annals of Pure and Applied Logic \textbf{50} (1990), 271--301.
\end{thebibliography}
\end{document}